\newcommand{\Reals}{{\mathbb R}}
\newcommand{\y}{\text{\bf y}}
\newcommand{\f}{\text{\bf f}}
\newcommand{\FF}{\text{\bf F}}
\newcommand{\E}{\text{\bf E}}
\renewcommand{\tr}{^{\text{\tiny\sf T}}}
\newcommand{\m}{\phantom{-}}
\def\squash{1}
\def\T#1{\scalebox{\squash}{\enspace\raisebox{-1mm}{\csname Tree#1\endcsname}\enspace}}
\def\S#1#2{\scalebox{\squash}{\enspace\raisebox{-1mm}{\csname Stump#1#2\endcsname}\enspace}}
\newtheorem{theorem}{Theorem}
\theoremstyle{remark}
\newtheorem{remark}{Remark}
\begin{document}

\title{Isomeric trees and the order of Runge--Kutta methods}
\author{John C. Butcher and Helmut Podhaisky}

\date{}
\maketitle

\begin{abstract}
The conditions for a Runge--Kutta method to be of order $p$ with $p\ge 5$ for
a scalar non-autonomous problem are a proper subset of the order conditions for a vector problem. 
Nevertheless, Runge--Kutta methods that were derived historically only for scalar problems happened to be of the same order 
for vector problems. 
We relate the order conditions for scalar problems to factorisations of the Runge--Kutta trees into
``atomic stumps'' and enumerate those conditions up to $p=20$.
Using  a special search procedure
over unsatisfied order conditions, new Runge--Kutta methods of ``ambiguous orders'' five and six are constructed.
These are used to verify the validity of the results.
\end{abstract}

\textbf{Keywords:} Runge--Kutta method, scalar, non-autonomous, order condition

\section{Introduction}\label{sect:int}
The pioneers in the theory of Runge--Kutta methods, Runge \cite{Runge}, Heun
\cite{Heun}, Kutta \cite{Kutta} and Nystr\"om \cite{Nystrom} studied accuracy
and order questions using a generic initial value problem
\begin{equation}\label{eq:scalar}
	y'(x) = f(y(x),x), \quad y(x_0)=y_0, \qquad y(x)\in \Reals.
\end{equation}
However, 
in recent times, a more general test problem has been used:
\begin{equation}\label{eq:vector}
	\y'(x) = \f(\y(x)), \quad \y(x_0)=\y_0, \qquad \y(x)\in \Reals^d,\quad d\ge 1.
\end{equation}
It is known that the order conditions for (\ref{eq:scalar}) are a proper subset
of those for (\ref{eq:vector}) although, up to $p=4$, the conditions for order
$p$ are identical in the two cases.  For $p=5$, there are 17 conditions for the
vector case.
For a tableau, for an $s$ stage explicit method,
\begin{equation*}
	\begin{array}{c|c}
		c & A    \\
		\hline
		  & b\tr
	\end{array}=
			\begin{array}{c|ccccccccc}
				0       & 0                                                              \\
				c_2     & a_{21}                                                         \\
				c_3     & a_{31}    & a_{32}                                             \\
				\vdots  & \vdots    & \vdots    & \ddots                                 \\
				c_{s-1} & a_{s-1,1} & a_{s-1,2} & \dots  & a_{s-1,s-2}                   \\
				c_{s}   & a_{s,1}   & a_{s,2}   & \dots  & a_{s,s-2}   & a_{s,s-1}       \\
				\hline
				        & b_1       & b_2       & \dots  & b_{s-2}     & b_{s-1}   & b_s
			\end{array} 
            ,
\end{equation*}
two of these conditions are
\begin{subequations}
\begin{align}
	\sum b_i c_i a_{ij} a_{jk}c_k  & =  \tfrac1{30}, \label{XXX}\\
	\sum b_i a_{ij} c_j a_{jk} c_k & =  \tfrac1{40}, \label{YYY}
\end{align}
\end{subequations}
but, in the scalar case, these are replaced by the single condition
\begin{equation}
	\sum b_i (c_i+c_j) a_{ij} a_{jk}c_k =  \tfrac7{120}. \label{ZZZ}
\end{equation}
In discussing the relationship between the vector and the scalar cases, it will be convenient to use a generic problem which is both high-dimensional and non-autonomous:
\begin{equation}\label{eq:non_auto}
	\y'(x) = \f(\y(x),x), \quad \y(x_0)=\y_0, \qquad \y(x)\in \Reals^d.
\end{equation}
A preliminary announcement of the results of this paper was given in \cite{2018-butcher-axioms}.
Here, we will evaluate the order conditions up to $p=20$.
Known methods of ambiguous order five were found using traditional algebraic techniques.  In this paper we use a systematic search routine over as-yet unresolved order conditions. 
This makes it possible to derive new methods of ambiguous orders five and six respectively.
The paper is organised as follows. 
In Section \ref{sect:trees}, we will
review the basics of B-series and trees.  This includes a consideration of
the order conditions for a vector based initial-value problem. 
This is followed by Section
\ref{sect:stumps} in which trees are related to atomic stumps. 
The formal Taylor series expansion for scalar problems is 
derived in Section~\ref{sect:order}, for comparison with the known vector problem solution. 
In Section ~\ref{sect:o56}, a search procedure is outlined for systematically  deriving methods  satisfying subsets of the order conditions.  This technique was used to find two new methods of ``ambiguous order'' five and six respectively.  Tests reported in Section~\ref{sect:exp} give numerically-computed orders close to their expected values.

\section{Trees and B-series}\label{sect:trees}

The formal Taylor series for the solution to (\ref{eq:vector}) can be written 
in the ``B-series" form
\begin{equation}\label{eq:DE}
	\y(x_0+h) = \y_0+ \sum_{t\in T} \frac{h^{|t|}}{\sigma(t)t!} \FF(t)(\y_0),
\end{equation}
where $t\in T$ is a generic (rooted) tree, $|t|$ is the order, $\sigma(t)$ is the symmetry, $t!$ is the factorial of $t$ and $\FF(t)(\y_0)$ is the elementary differential of $\f$ evaluated at $\y_0$.

Similarly, the Taylor series for the numerical solution using a Runge--Kutta method
is given by
\begin{equation}\label{eq:RK}
\y_1 = \y_0+ \sum_{t\in T} \frac{h^{|t|}\Phi(t)}{\sigma(t)} \FF(t)(\y_0),
\end{equation}
where $\Phi(t)$ denotes the ``elementary weight".  By comparing (\ref{eq:RK}) with
(\ref{eq:DE}) we deduce the  conditions for order $p$
\begin{equation}
    \Phi(t) = \frac1{t!}, \qquad |t| \le p.
\end{equation}
The various tree-dependent quantities up to order 4 are given as
\begin{equation*}
\begin{array}{c|cccccccc}
t & \T{1} & \T{2} & \T{3} & \T{4} & \T{5} & \T{6} & \T{7} & \T{8}\\
\hline
|t| & 1&2&3&3&4&4&4&4\\
\sigma(t )&1&1&2&1&6&1&2&1\\
t! &1&2&3&6&4&8&12&24\\
\FF(t) & \f &\f'\f &\f''\f\f & \f'\f'\f &\f^{(3)}\f\f\f 
& \f''\f\f'\f & \f'\f''\f\f & \f'\f'\f'\f\\
\Phi(t) & b\tr 1 & b\tr c & b\tr c^2 & b\tr Ac &
b\tr c^3& b\tr cAc & b\tr A^2c & b\tr Ac^2
\end{array} 
\end{equation*}
Further details are given in \cite{bseriesbook}.
B-series are also often used in the analysis of numerical methods when \eqref{eq:vector} assumes a special structure.
For example, the order conditions for exponential integrator can be expressed using bi-coloured trees, cf.\ \cite{MR2182810}.

Trees can be written recursively in a number of different ways, including a  system which matches the structure of the corresponding elementary differentials. 
The single tree with order $1$ is denoted by $\tau$ and $\tau_m$ will denote a prefix operator on a sequence of $m$ trees.
Thus $t=\tau_m t_1 t_2 \cdots t_m$ will represent the tree
\[
\begin{tikzpicture}[black,x=8mm,y=8mm,radius=1.5pt,line width=1.4pt]
\draw[fill=black] circle;
\draw(0,0)to(-0.9,1);\node () at (-0.9,1.3) {$t_1$};
\draw(0,0)to(-0.4,1);\node () at (-0.4,1.3) {$t_2$};
\draw(0,0)to(0.9,1);\node () at (0.9,1.3) {$t_m$};
\node () at (0.25,1.3) {\small$\dots$};
\end{tikzpicture}
\]
for which the corresponding elementary weight is 
\[
	\FF(t)=\f^{(m)} \FF(t_1)  \FF(t_2) \cdots  \FF(t_m) .
\]

\section{Stumps, atomic stumps and isomeric trees}\label{sect:stumps}
A stump is formed from a tree by replacing one or more leaves by un-filled valencies.
In particular, an atomic stump, is a structure of the form
\begin{equation}
s_{mn} = \tau_{m+n}\tau^m =
\raisebox{-5mm}{\begin{tikzpicture}[black,x=9mm,y=9mm,radius=1.5pt]
\draw[fill=black](0,0) circle;
\draw[line width=1.4pt](0,0)to(-0.95,1); \draw[line width=1pt,fill=black](-0.95,1) circle;
\draw[line width=1.4pt](0,0)to(-0.15,1); \draw[line width=1pt,fill=black](-0.15,1) circle;
\draw[line width=1.4pt](0,0)to(0.15,1); \draw[line width=1pt,fill=white](0.15,1) circle;
\draw[line width=1.4pt](0,0)to(0.95,1); \draw[line width=1pt,fill=white](0.95,1) circle;
\node () at (-0.55,1) {\scriptsize$\dots$};
\node () at (0.55,1) {\scriptsize$\dots$};
\draw[<->,thick](-0.95,1.2)to (-0.15,1.2);
\draw[<->,thick](0.95,1.2)to (0.15,1.2);
\node () at (-0.55,1.4) {\footnotesize$m$};
\node () at (0.55,1.4) {\footnotesize$n$};
\end{tikzpicture}},
\end{equation}
where \tikz{\draw[fill=white,radius=2pt,very thick](0,0) circle;} denotes an valency. 
Examples of products of atomic stumps, resulting in trees, are
\begin{alignat*}{4}
	s_{21} s_{20}               & =\S41   \S30          &  & =  \T{20},                    \\
	s_{02} s_{10} s_{01} s_{20} & = \S32 \S20 \S21 \S30 &  & =  \S32 \S20 \T{7} = \T{61}.
\end{alignat*}
 All trees can be written as products of atomic stumps. For example,
\begin{align*}
	\T{4} & = \S21 \S20,    \\
	\T{6} & = \S31 \S20,    \\
	\T{7} & = \S21 \S30,     \\
	\T{8} & = \S21 \S21 \S20.
\end{align*}
 Isomeric trees are trees which have the same atomic factors, in a different order. If $t_1$ and $t_2$ are such trees, then we write $t_1\sim  t_2$.
 
Isomers do not appear until  order $5$, where the two trees
\begin{align*}
	t_{12} = \T{12} & =  \S31 \S21  \S20, \\
	t_{15} = \T{15} & =  \S21 \S31 \S20,
\end{align*}
occur.  
Hence, $ t_{12} \sim  t_{15}$. 
The numbering of  $t_{12}$  $t_{15}$, and other trees throughout the paper, are the same as the numbering adopted in \cite{bseriesbook}.

\def\squash{0.8}
For orders $5$, $6$, $7$, all isomeric classes are shown in
Table \ref{tab:iso567}.
\begin{table}[htp]
\def\arraystretch{1.3}
\caption{\label{tab:iso567}Isomers of order 5, 6 and 7}
\begin{center}
\begin{tabular}{@{}ccc}
atomic stumps & isomers & isomeric classes  \\
{\bf Order 5} \hfill \hfill\\[-2mm]
\S20 \S21 \S31 & 
\T{12} $\sim$ \T{15}
& $\{t_{12},t_{15}\}$ \\
{\bf Order 6} \hfill\hfill\\[-2mm]
	\S20 \S21 \S41      & \T{21}$\sim$\T{30}             & $\{t_{21}, t_{30}\}$         \\
	\S20 \S20 \S21 \S32 & \T{28}$\sim$\T{33}             & $\{t_{28}, t_{33}\}$         \\
	\S20 \S21 \S21 \S31 & \T{26}$\sim$\T{32}$\sim$\T{35} & $\{t_{26}, t_{32}, t_{35}\}$ \\
	\S21 \S30 \S31      & \T{25}$\sim$\T{31}             & $\{t_{25}, t_{31}\}$\\
{\bf Order 7}\hfill\hfill\\[-2mm]
    \S21 \S31 \S40           & \T{54}$\sim$\T{71}                         & $\{t_{54}, t_{71}\}$                 \\
    \S20 \S21 \S21 \S41      & \T{46}$\sim$\T{69}$\sim$\T{78}             & $\{t_{46}, t_{69}, t_{78}\}$         \\
    \S20 \S21 \S51           & \T{41}$\sim$\T{67}                         & $\{t_{41}, t_{67}\}$                 \\
    \S20 \S21 \S31 \S31      & \T{52}$\sim$\T{55}$\sim$\T{72}             & $\{t_{52}, t_{55}, t_{72}\}$         \\
    \S20 \S21 \S30 \S32      & \T{61}$\sim$\T{64}$\sim$\T{75}             & $\{t_{61}, t_{64}, t_{75}\}$         \\
    \S21 \S30 \S41           & \T{45}$\sim$\T{68}                         & $\{t_{45}, t_{68}\}$                 \\
    \S20 \S31 \S41           & \T{44}$\sim$\T{50}                         & $\{t_{44}, t_{50}\}$                 \\
    \S20 \S20 \S21 \S42      & \T{48}$\sim$\T{70}                         & $\{t_{48}, t_{70}\}$                 \\
    \S21 \S21 \S30 \S31      & \T{56}$\sim$\T{73}$\sim$\T{79}             & $\{t_{56}, t_{73}, t_{79}\}$         \\
    \S20 \S21 \S21 \S21 \S31 & \T{57}$\sim$\T{74}$\sim$\T{80}$\sim$\T{83} & $\{t_{57}, t_{74}, t_{80}, t_{83}\}$ \\
    \S20 \S20 \S21 \S21 \S32 & \T{62}$\sim$\T{65}$\sim$\T{76}$\sim$\T{81} & $\{t_{62}, t_{65}, t_{76}, t_{81}\}$ \\
    \S20 \S20 \S31 \S32      & \T{53}$\sim$\T{60}                         & $\{t_{53}, t_{60}\}$
\end{tabular}
\end{center}
\end{table}
Let $T_p$ denote the set of trees with order $p$ and $U_p$ the set of 
isomeric classes with order $p$, so that $\bigcup  U_p = T_p$.  From Table
\ref{tab:iso567}, we can write
\begin{align*}
U_5 &=\big\{ \{t_{12},t_{15}\}\big\}\\
U_6 &= \big\{\{t_{21}, t_{30}\}, \{t_{28}, t_{33}\}, \{t_{26}, t_{32}, t_{35}\},\{t_{25}, t_{31}\} \big\}\\
U_7 &=\big\{\{t_{54},t_{71}\},\{t_{46},t_{69},t_{78}\},\{t_{41},t_{67}\},\{t_{52},t_{55},t_{72}\},\{t_{61},t_{64},t_{75}\},\\
&\qquad \{t_{45},t_{68}\},\{t_{44},t_{50}\},\{t_{48},t_{70}\},\{t_{56},t_{73},t_{79}\},\{t_{57},t_{74},t_{80},t_{83}\},
\{t_{53},t_{60}\}\big\}
\end{align*}

Computationally, the isomeric classes $U_p$ can be found by (i) generating all trees to order $p$ using the algorithm 3 from \cite{bseriesbook} and (ii) grouping them according to their products of stumps.

\section{Order conditions for scalar and vector problems}\label{sect:order}
We will consider the elementary differential corresponding to the stump
$s_{mn}$.  For (\ref{eq:vector}), $\FF(s_{mn})$ is the $n$-linear operator
$\f^{(m+n)} \f^m$ and for (\ref{eq:non_auto}) this becomes
\begin{equation*}
\begin{bmatrix}
\displaystyle\sum_{i=0}^m {m\choose i} (\partial_x^{m-i} \f^{\,(i+n)})\f^{\,i}\\
0
\end{bmatrix}:=
\begin{bmatrix}
\widetilde\FF(s_{mn})\\
0
\end{bmatrix}.
\end{equation*}
If $t=s_{m_1n_1} s_{m_2n_2} \cdots s_{m_kn_k} $ then  
\begin{equation*}
\FF(t) = \begin{bmatrix}
\widetilde\FF(t)\\
0
\end{bmatrix},
\end{equation*}
where
\begin{equation*}
\widetilde\FF(t) = 
\widetilde\FF(s_{m_1n_1}) \widetilde\FF(s_{m_2n_2}) \cdots \widetilde\FF(s_{m_kn_k}) .
\end{equation*}

The formal Taylor  series for the error $\E(h):=\y_1-\y(x_0+h)$, in a Runge--Kutta step, is found from (\ref{eq:RK})  and (\ref{eq:DE}).  Rewrite this in the form
\begin{equation}
\E(h) = \sum_{p=1}^{\infty} h^p\sum_{u\in U_p} 
\sum_{t\in u} \frac1{\sigma(t)}\Big(\Phi(t) - \frac1{t!}\Big)\FF(t).
\end{equation}
The term corresponding to $u\in U_p$ takes a special form for the scalar case $d=1$, because $\FF(t)$ is identical for all $t\in u$ and the order conditions for the $h^p$ terms become
\begin{equation}\label{eq:err_s}
\sum_{t\in u} \frac1{\sigma(t)}\Big(\Phi(t) - \frac1{t!}\Big)=0, \qquad u\in U_p,
\end{equation}
compared with
\begin{equation}\label{eq:err_v}
\Phi(t) = \frac1{t!},\qquad t\in T_p,
\end{equation}
in the general vector case.

For each $p$, let $m_p$ the number of $t\in T_p$, and define $M_p=\sum_{i=1}^pm_p$.  Thus, $M_p$ is the number of order conditions for a vector problem.
Similarly, let $n_p$ be the number of members of $U_p$ and  $N_p=\sum_{i=1}^pn_p$
so that $N_p$ is the number of order conditions for a scalar problem.
Values of $m_p$, $M_p$, $n_p$ and $N_p$ are shown for $p\le 20$ in Table \ref{Table:OrderCond2}.
Note that neither  $(n_p)$ nor $(N_p)$ is listed in the On-Line Encyclopedia of Integer Sequences at \url{https://oeis.org/}.
For further details on how to compute those values (for larger values of $p$ without generating all trees) we refer to 
our Mathematica and Julia code available through \url{https://github.com/computational-b-series/isomers}.

\begin{table}[htp]
	\caption{\label{Table:OrderCond2}
		Number of scalar and vector order conditions for order $p$}
	\[
		\begin{array}{rrrrr}
			p  & m_p       & M_p        & n_p    & N_p    \\\hline
			1  & 1         & 1          & 1      & 1      \\
			2  & 1         & 2          & 1      & 2      \\
			3  & 2         & 4          & 2      & 4      \\
			4  & 4         & 8          & 4      & 8      \\
			5  & 9         & 17         & 8      & 16     \\
			6  & 20        & 37         & 15     & 31     \\
			7  & 48        & 85         & 28     & 59     \\
			8  & 115       & 200        & 51     & 110    \\
			9  & 286       & 486        & 91     & 201    \\
			10 & 719       & 1205       & 160    & 361    \\
			11 & 1842      & 3047       & 278    & 639    \\
			12 & 4766      & 7813       & 475    & 1114   \\
			13 & 12486     & 20299      & 803    & 1917   \\
			14 & 32973     & 53272      & 1342   & 3259   \\
			15 & 87811     & 141083     & 2218   & 5477   \\
			16 & 235381    & 376464     & 3629   & 9106   \\
			17 & 634847    & 1011311    & 5885   & 14991  \\
			18 & 1721159   & 2732470    & 9455   & 24446  \\
			19 & 4688676   & 7421146    & 15068  & 39514  \\
			20 & 12826228  & 20247374   & 23824  & 63338  
		\end{array}
	\]
\end{table}

\section{Construction of methods of order 5 and 6}\label{sect:o56}

Given that the number of order conditions is lower for scalar problems than for vector problem, one might think that restricting to scalar problems simplifies the construction of Runge--Kutta method of order $p$ considerably, as observed in the autonomous, scalar case, cf. \cite{MR1957156}.  It turns out, however, at least for $p=5$ und $p=6$, that this is not the case. The following theorem indicates one reason.

\begin{theorem}\label{Th:d1}
	A method of order $p$ for \eqref{eq:scalar} with $p\le 6$
	that satisfies the simplifying condition {\rm D(1)} is also of the same order for \eqref{eq:vector}.
\end{theorem}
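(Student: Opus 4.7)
For $p \le 4$ every isomer class in Table~\ref{tab:iso567} is a singleton, so scalar and vector conditions coincide and the theorem is trivial. For $p \in \{5,6\}$, the scalar condition \eqref{eq:err_s} restricted to any singleton class is already the vector condition $\Phi(t) = 1/t!$, so the work reduces to showing that for each non-singleton class $u$ the single combined scalar equation, together with D(1) and order conditions of order $\le p$ already obtained, determines $\Phi(t) = 1/t!$ individually for every $t \in u$.

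\textbf{The D(1) reduction.} Written out, D(1) is $\sum_i b_i a_{ij} = b_j(1-c_j)$, which inside an elementary weight replaces any outer factor $b\tr A$ by $b\tr - b\tr\mathrm{diag}(c)$. Consequently, whenever a tree $t = \tau_1 t'$ has root with a single child, so that $\Phi(t) = b\tr A\,\psi(t')$, one has
\[
  \Phi(t) \;=\; \Phi(t') \;-\; b\tr\bigl(c \cdot \psi(t')\bigr),
\]
where $t'$ has order $p-1$ and the second term is the elementary weight of the order-$p$ tree obtained from $t'$ by grafting a new leaf at its root.

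\textbf{Order 5.} The only non-singleton class is $\{t_{12}, t_{15}\}$. Only $t_{15} = \tau_1 t_6$ has a single-child root (the weight of $t_{12}$ is $b\tr(c \cdot A^2 c)$, which does not begin with $b\tr A$), and applying the D(1) reduction gives
\[
  \Phi(t_{15}) \;=\; \Phi(t_6) \;-\; b\tr(c^2 \cdot Ac) \;=\; \tfrac{1}{8} - \tfrac{1}{10} \;=\; \tfrac{1}{40},
\]
using the order-4 condition for $t_6$ and the singleton order-5 condition for the tree with weight $b\tr(c^2 \cdot Ac)$. The combined scalar condition \eqref{ZZZ} then forces $\Phi(t_{12}) = 1/30$, establishing vector order 5.

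\textbf{Order 6 and main obstacle.} For each of the four non-singleton classes at order 6, inspection of the stump decompositions shows that at least $|u|-1$ of its members have the form $t = \tau_1 t'$, so that D(1) provides $|u|-1$ equations. In every case the right-hand sides involve only weights already pinned down, namely lower-order weights from the $p=5$ step, singleton order-6 weights (available from scalar order 6), or order-6 weights from a non-singleton class processed earlier; for example $\{t_{21},t_{30}\}$ must come before $\{t_{26},t_{32},t_{35}\}$, since one of the D(1)-equations for the latter class involves $\Phi(t_{30})$. The combined scalar condition for $u$ supplies the remaining equation, and solving the resulting square system pins down every $\Phi(t) = 1/t!$ for $t \in u$. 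The main obstacle is the case-analysis needed to verify, for the size-3 class $\{t_{26},t_{32},t_{35}\}$, that the two available D(1)-reductions together with the one scalar equation form a $3\times 3$ system of full rank---a finite computation from the explicit tree shapes that succeeds at $p = 6$ but would have to be redone (and might fail) at any higher order, which is precisely why the cut-off $p \le 6$ appears in the statement.
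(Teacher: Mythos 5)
Your proof is correct and takes essentially the same route as the paper's: the paper's much terser argument simply observes that every isomeric class of order $5$ or $6$ contains at most one tree to which {\rm D(1)} cannot be applied, leaving the reduction mechanism and the bookkeeping implicit. In fact you make explicit a point the paper glosses over entirely --- that the grafted trees appearing on the right-hand side of each {\rm D(1)} reduction must themselves already have known weights, which is what forces the class $\{t_{21},t_{30}\}$ to be resolved before $\{t_{26},t_{32},t_{35}\}$.
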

\begin{proof}
	There is nothing to prove for $p\le 4$. 
	For $p\in\{5,6\}$,  it follows by inspecting the isomeric classes:
	It turns out, that in every class there is never more than one 
	tree for which D(1) cannot be applied.
\end{proof}

A method that has different orders of convergence for \eqref{eq:scalar} and \eqref{eq:vector} will be called \emph{ambiguous}.
Theorem~\ref{Th:d1} states that a method of ambiguous order with $p\le 6$ cannot satisfy D(1).

\def\squash{0.6}
\begin{remark}
	Note, that the proof of Theorem~\ref{Th:d1} is not applicable for $p\ge 7$, since, for example, 
	one has the isomeric class 
	$\{t_{62}, t_{65}, t_{76}, t_{81}\}$ containing in particular
	\[
	t_{62} = \T{62}\sim\T{65} = t_{65}
	\]
but {\rm D(1)} cannot be applied to either of these trees.
\end{remark}

\subsection{Deriving specific methods using systematic searches}
In this section, the degree of a tree $\delta(t)$, is defined recursively by
\begin{align*}
\delta(\tau) &= 0,\\
\delta(\tau_m t_1 t_2 \cdots t_m) &= 1 + \sum_{i=1}^m \delta(t_i).
\end{align*}
For ambiguous order five, the following conditions need to be satisfied:
\begin{alignat*}{7}
\delta(t) &=0, &b\tr1 &=1,\\
\delta(t) &=1, &b\tr c &=\tfrac12, &b\tr c^2 
&=\tfrac13,&b\tr c^3&=\tfrac14, \\
&&b\tr c^4 &=\tfrac15,\\
\delta(t) &=2, &b\tr Ac &=\tfrac16, &b\tr cAc &=\tfrac18,&b\tr Ac^2 &=\tfrac1{12},\\
&&b\tr c^2Ac &=\tfrac1{10},  
&b\tr cAc^2 &=\tfrac1{10},& b\tr Ac^3 &=\tfrac1{20}\\
\delta(t) &=3, &b\tr A^2c &=\tfrac1{24}, 
&b\tr cA^2c &=\tfrac1{30}+g, 
\qquad
&b\tr AcAc &=\tfrac1{40}-g, \\
&&\qquad b\tr A^2c^2 &=\tfrac1{60},\qquad&b\tr (Ac)^2 &=\tfrac1{20},\\
\delta(t) &=4, 
 &b\tr A^3c &=\tfrac1{120}, \\
\end{alignat*}
where $g\ne 0$.

A test program was constructed which, for given $p$ and $q$, evaluated the
discrepancy in all order conditions, such that $|t|\le p$ and $\delta(t)\le q$,
and carried out a solution for a selected subset of the equations  provided by
the non-zero discrepancies.  We will refer to an application of the test
procedure as $\text{\sf test}(p,q)$

\def\arraystretch{1.5}

\subsection{A method of ambiguous order 5}
The  method given by (\ref{eq:fake5}), was derived by carrying out a number of steps:
\begin{enumerate}
\item Choose $[c_2,c_3,c_4,c_6]=[\frac14,\frac12,\frac34,1]$, on the basis of simplicity.
\item Carry out $\text{\sf test}(4,2)$ and solve for $b\tr$ with $c_5$ as a free parameter.
\item Carry out $\text{\sf test}(5,2)$ and solve for $c_5$.
\item Carry out $\text{\sf test}(5,3)$ and solve the resulting linear equations in the free $a_{ij}$.
\item Carry out $\text{\sf test}(5,4)$ and select those equations which can be written linearly,
in terms of free $a_{ij}$ and $g$.
\item Repeat 5. until $\text{\sf test}(5,4)$ results in no further unsatisfied equations.
\end{enumerate}
\begin{equation}\label{eq:fake5}
	\begin{array}{c|cccccc}
		0          &                                                                                                         \\
		\frac14    & \frac{1}{4}                                                                                             \\
		\frac12    & -\frac{1}{2}\m     & 1                                                                                    \\
		\frac34    & \frac{3}{16}     & 0               & \frac{9}{16}                                                       \\
		\frac3{10} & \frac{291}{2500} & \frac{108}{625} & \frac{63}{2500} & -\frac{9}{625} \m                                  \\
		1          & -\frac{146}{135}\m & \frac{152}{15}  & -\frac{7}{15}\m  & \frac{428}{405} & -\frac{700}{81} \m               \\\cline{1-7}
		           & \frac{5}{54}     & 0               & 0               & \frac{32}{81}   & \frac{250}{567} & \frac{1}{14}
	\end{array}
\end{equation}
For the trees $t_{12}=\T{12}$ and $t_{15}=\T{15}$, method \eqref{eq:fake5} has 
\begin{equation}\label{eq:o5}
\left(\Phi(t_{12}) - \frac{1}{t_{12}!}  \right) + 
\left(\ \Phi(t_{15})- \frac{1}{t_{15}!}\right) = 0,
\end{equation}
but the terms do not vanish individually.
All other order conditions for the trees up to order $5$ are satisfied. 
Hence, method \eqref{eq:fake5} is of order 5 for \eqref{eq:scalar} but only of order 4 for \eqref{eq:vector}.  
Note that  factors $\sigma(t_{12})^{-1}$, $\sigma(t_{15})^{-1}$, are omitted from the terms in (\ref{eq:o5}), 
because these are identical.

\subsection{A method of ambiguous order 6}

The following method was constructed, by a similar process to (\ref{eq:fake5}).
\def\A#1/#2.{\frac{#1\sqrt{415}}{#2}}
\def\F#1/#2.{\frac{#1}{#2}}
\def\SP{\hspace{-1pt}}
\begin{equation}\label{eq:fake6}
{\scriptsize
\hspace{-10mm}\begin{array}{@{}c@{}|@{}c@{\SP}c@{\SP}
c@{\SP}c@{\SP}c@{\SP}c@{}c@{\SP}c@{\SP}}
0\\
\F1/2.&\F1/2.\\
1 & 0 & 1\\
\F1/5. &\F16/125.&\F13/125.&\F\!-\!4/125.\\
\F2/5.&\A\!-\!136\!-\!2/2875.&\A\!-\!638\!-\!6/2875.&\A821\!+\!7/11500.&\A275\!+\!/460.\\
\F3/5.&\A2469\!-\!31/40250.&\A\!-\!777\!+\!18/2875.&\A18979\!-\!326/241500.&
\A895\!+\!2/3220.&\A95\!-\!1/210.\\
1&\A91295\!-\!6701/45885.&\A\!-\!1415\!+\!12/2185.&\A\!-\!42285\!+\!3300/6118.&
\A\!-\!42285\!+\!3300/6118.&\A4115\!-\!285/399.&\A\!-\!260\!+\!20/57.\\
\F4/5.&\A\!-\!77534\!+\!6878/181125.&
\A1116\!-\!12/2875.&\A\!-\!3216783\!+\!103963/14852250.&\A3489\!-\!223/1610.&
\A\!-\!918\!+\!58/315.&\A82\!-\!4/45.&\A\!-\!285\!+\!38/15375.\\
\hline
&\frac{19}{288}&0&0&\frac{25}{96}&\frac{25}{144}&\frac{25}{144}&\frac{19}{288}&\frac{25}{96}
\end{array}}\hspace{-10mm}
\end{equation}

For $t_{25}=\T{25}$ and $t_{26}=\T{26}$, one finds
\begin{align*}
	\phi(t_{25}) - \frac{1}{t_{25}!} & = \frac{-20-3 \sqrt{415}}{82800}, \\
	\phi(t_{26}) - \frac{1}{t_{26}!} & = \frac{-20-3 \sqrt{415}}{41400},
\end{align*}
whereas the corresponding values for the isomeric forms $t_{31}=\T{31}$ and $t_{32}=\T{32}$ 
take the opposite signs. Since  also $\sigma(t_{25})=\sigma(t_{31})$ and $\sigma(t_{26})=\sigma(t_{32})$,
method \eqref{eq:fake6} is of order 6 for scalar problems but only order 5 for vector problems.

\section{Numerical experiments}\label{sect:exp}

Following~\cite{bseriesbook}, we consider the scalar initial value problem
\begin{equation}\label{eq:example-scalar}
	\frac{\dd y}{\dd x} = \frac{y-x}{y+x},\quad y(x_0) = y_0,\quad x\in[x_0, x_1],
\end{equation}
with $t_0 = \exp(\frac{\pi}{10})$, $t_1 = \exp(\frac{\pi}{2})$, $x_0 = t_0 \sin(\ln(t_0))$
and $x_1 = t_1 \sin(\ln(t_1))$
and the equivalent vector problem
\begin{equation}\label{eq:example-vector}
	\frac{\dd}{\dd t} \begin{bmatrix}z^1\\z^2\end{bmatrix} =
	\|z\|^{-1} \begin{bmatrix}z^2+z^1\\z^2-z^1 \end{bmatrix},\quad z(t_0) = [x_0, y_0]\tr,\quad t\in[t_0,t_1].
\end{equation}
Figure~\ref{Fig:errorplots} shows the norm of the error at the endpoint for the two methods \eqref{eq:fake5} and
\eqref{eq:fake6} applied to \eqref{eq:example-scalar} and \eqref{eq:example-vector}, respectively. 
It is seen that the methods behave as expected: the observed ``orders'' differ approximately by 
one.

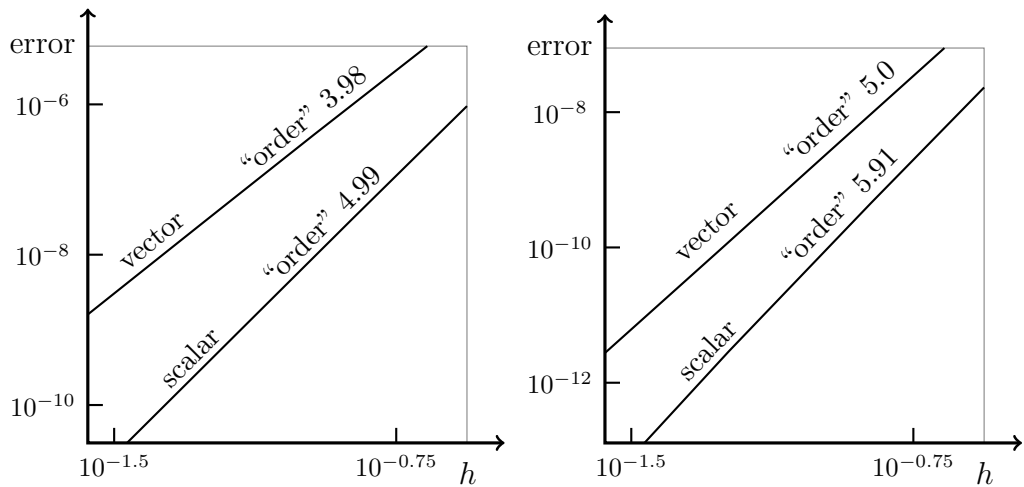
\begin{figure}[htp]
\centerline{
\begin{tikzpicture}[x=5cm,y=1cm]
\draw[gray](-1.570,-10.504)rectangle(-0.5619,-5.225);
\draw[<->,very thick]  (-1.570,-4.725) to(-1.570,-10.504)to (-0.4619,-10.504);
\draw[thick](-1.570,-8.795)to(-1.269,-7.595)to(-0.9684,-6.402)to(-0.6674,-5.225);
\draw[thick](-1.465,-10.50)to(-1.164,-9.002)to(-0.8629,-7.505)to(-0.5619,-6.023);
\draw[thick](-1.5,-10.504)to ($(-1.5,-10.504)+(0,2mm)$); 
\node () at ($(-1.5,-10.504)-(0,3mm)$)  {\footnotesize$10^{-1.5}$}; 
\draw[thick](-0.75,-10.504)to ($(-0.75,-10.504)+(0,2mm)$); 
\node () at ($(-0.75,-10.504)-(0,3mm)$)  {\footnotesize$10^{-0.75}$}; 
\node () at ($(-0.56,-10.504)-(0,4mm)$)  {$h$}; 
\draw[thick](-1.570,-6)to ($(-1.570,-6)+(2mm,0)$); 
\node () at ($(-1.570,-6)-(6mm,0)$)  {\footnotesize$10^{-6}$}; 
\draw[thick](-1.570,-8)to ($(-1.570,-8)+(2mm,0)$); 
\node () at ($(-1.570,-8)-(6mm,0)$)  {\footnotesize$10^{-8}$}; 
\draw[thick](-1.570,-10)to ($(-1.570,-10)+(2mm,0)$); 
\node () at ($(-1.570,-10)-(6mm,0)$)  {\footnotesize$10^{-10}$}; 
\node () at ($(-1.570,-5.225)-(6mm,0)$)  {error}; 
\node[draw=none,rotate=39.5]() at (-1.0,-6.2)  {\small``order" $3.98$};
\node[draw=none,rotate=39.5]() at (-1.4,-7.8)  {\small vector};
\node[draw=none,rotate=45]() at (-0.95,-7.6)  {\small``order" $4.99$};
\node[draw=none,rotate=45]() at (-1.3,-9.35) {\small scalar};
\end{tikzpicture}
\begin{tikzpicture}[x=5cm,y=0.9cm]
\draw[gray](-1.570,-12.89)rectangle(-0.5619,-7.052);
\draw[<->,very thick]  (-1.570,-6.552) to(-1.570,-12.89)to (-0.461,-12.89);
\draw[thick](-1.570,-11.56)to(-1.269,-10.06)to(-0.9684,-8.555)to(-0.6674,-7.052);
\draw[thick](-1.465,-12.89)to(-1.164,-11.11,)to(-0.8629,-9.345)to(-0.5619,-7.635);
\draw[thick](-1.5,-12.89)to ($(-1.5,-12.89)+(0,2mm)$); 
\node () at ($(-1.5,-12.89)-(0,3mm)$)  {\footnotesize$10^{-1.5}$}; 
\draw[thick](-0.75,-12.89)to ($(-0.75,-12.89)+(0,2mm)$); 
\node () at ($(-0.75,-12.89)-(0,3mm)$)  {\footnotesize$10^{-0.75}$}; 
\node () at ($(-0.56,-12.89)-(0,4mm)$)  {$h$}; 
\draw[thick](-1.570,-8)to ($(-1.570,-8)+(2mm,0)$); 
\node () at ($(-1.570,-8)-(6mm,0)$)  {\footnotesize$10^{-8}$}; 
\draw[thick](-1.570,-10)to ($(-1.570,-10)+(2mm,0)$); 
\node () at ($(-1.570,-10)-(6mm,0)$)  {\footnotesize$10^{-10}$}; 
\draw[thick](-1.570,-12)to ($(-1.570,-12)+(2mm,0)$); 
\node () at ($(-1.570,-12)-(6mm,0)$)  {\footnotesize$10^{-12}$}; 
\node () at ($(-1.570,-7.052)-(6mm,0)$)  {error}; 
\node[draw=none,rotate=42]() at (-0.95,-8.0)  {\small``order" $5.0$};
\node[draw=none,rotate=42]() at (-1.3,-9.75)  {\small vector};
\node[draw=none,rotate=46]() at (-0.95,-9.4)  {\small``order" $5.91$};
\node[draw=none,rotate=46]() at (-1.3,-11.5) {\small scalar};
\end{tikzpicture}
}
\caption{\label{Fig:errorplots}Error behaviour of methods of ambiguous order applied to  the scalar 
problem (\ref{eq:example-scalar}) and the vector problem (\ref{eq:example-vector}).  
The methods are  the ambiguous fifth order method (\ref{eq:fake5}) (left-hand figure) and 
the ambiguous sixth order method (\ref{eq:fake6}) (right-hand figure).
The numerically observed ``order" is attached to each result.}
\end{figure}

We do not claim that the new methods~\eqref{eq:fake5} and \eqref{eq:fake6} are superior for any practical calculation.
The sole purpose of constructing them was to show that methods with ambiguous order do exist.

\section{Conclusion}\label{sect:conc}
By using factorisation of trees into atomic stumps, and allowing for possible permutations of factors, the concept of isomeric classes of trees is introduced.
It is shown that, for each class, only a single order condition needs to hold, for scalar problems, in place of 
a separate condition for each tree in the class, as is required for high dimensional problems.  

Using this analysis, special methods with \emph{ambiguous} order, 5 and 6 respectively,
have been derived.  It was expected that the asymptotic error behaviour would drop by one order when the methods were applied to problems of dimension greater than one.  This was observed to be as predicted in numerical experiments.


\bibliographystyle{elsarticle-num} 

\end{document}